\newcommand{\id}{\operatorname{id}}
\newcommand{\diffto}{\xrightarrow{\raisebox{-0.2 em}[0pt][0pt]{\smash{\ensuremath{\sim}}}}}
\newcommand{\rmap}{\longrightarrow}
\newcommand{\inc}{\hookrightarrow}
\newcommand{\cD}{\mathbb{D}}
\newcommand{\bd}{\partial}
\newcommand{\trans}{\;\;\makebox[0pt]{$\top$}\makebox[0pt]{$\cap$}\;\;}
\newcommand{\symb}{\operatorname{symb}}
\newcommand{\dom}{\operatorname{dom}}
\newcommand{\Symp}{\operatorname{Symp}}
\newcommand{\Poiss}{\operatorname{Poiss}}
\newcommand{\Diff}{\operatorname{Diff}}
\newcommand{\X}{\mathfrak{X}}
\newcommand{\e}{\varepsilon}
\newcommand{\p}{\pi}
\newcommand{\w}{\omega}
\newcommand{\s}{\mathbb{S}}
\newcommand{\W}{\Omega}
\renewcommand{\th}{\theta}
\newcommand{\R}{\mathbb{R}}
\newcommand{\C}{\mathbb{C}}
\newcommand{\Z}{\mathbb{Z}}
\newcommand{\inco}{\operatorname{in}}
\newcommand{\out}{\operatorname{out}}
\newtheorem{lemma}{{\bf Lemma}}[section]
\newtheorem{theorem}{{\bf Theorem}}[section]
\newtheorem*{theorem1}{Theorem A}
\newtheorem*{theorem2}{Theorem B}
\newtheorem*{theorem3}{Theorem C}
\newtheorem{corollary}{{\bf Corollary}}[section]
\newtheorem{definition}{ {\bf Definition}}[section]
\newtheorem{example}{Example}[section]
\newtheorem{remark}{Remark}[section]
\newtheorem*{acknowledgements}{Acknowledgements}
\begin{document}

  \author{Pedro Frejlich}
\address{ Departamento de Matem\'{a}tica
PUC Rio de Janeiro \\ Rua Marqu\^{e}s de S\~{a}o Vicente, 225 \\ G\'{a}vea, Rio de Janeiro - RJ, 22451-900
\\ Brazil} \email{frejlich.math@gmail.com}\author{David Mart\'inez Torres} \address{ Departamento de Matem\'{a}tica
PUC Rio de Janeiro \\ Rua Marqu\^{e}s de S\~{a}o Vicente, 225 \\ G\'{a}vea, Rio de Janeiro - RJ, 22451-900
\\ Brazil}\email{dfmtorres@gmail.com}

\author{Eva Miranda}\address{Department of Mathematics, Universitat Polit\`{e}cnica de Catalunya and BGSMath \\ EPSEB, Edifici P, Avinguda del Doctor Mara\~{n}\'{o}n, 42-44,
 Barcelona, Spain}
\email{eva.miranda@upc.edu}  \thanks{P. Frejlich has been supported by NWO-Vrije competitie  grant 'Flexibility and Rigidity of Geometric Structures' 612.001.101, and by IMPA (CAPES-FORTAL project).
 D. Mart\'inez Torres has been partially supported by FCT
Portugal (Programa Ci\^encia) and ERC Starting Grant no. 279729. E. Miranda has been partially supported by  the
  project \emph{ Geometr\'{\i}a algebraica, simpl\'{e}ctica, aritm\'{e}tica y aplicaciones}
  with reference: MTM2012-38122-C03-01/FEDER and the Severo Ochoa
   program reference Sev-2011-0087.  The authors are partially supported by European Science Foundation
   network CAST, \emph{Contact and Symplectic Topology}. We would like to thank the CRM-Barcelona  for their hospitality during the Research Programme \emph{Geometry and Dynamics of Integrable Systems}. }

\title{A note on symplectic topology of $b$-manifolds}

\date{\today}
\maketitle
\begin{abstract}
A Poisson manifold $(M^{2n},\p)$ is $b$-symplectic if $\bigwedge^n\p$ is transverse to the zero section.
In this paper we apply techniques native to Symplectic Topology  to address questions pertaining to $b$-symplectic manifolds.
We provide constructions of  $b$-symplectic structures
on open manifolds by Gromov's $h$-principle, and of $b$-symplectic manifolds with a prescribed singular locus, by means of surgeries.
\end{abstract}

\section{Introduction and statement of main results}

A Poisson structure on a manifold $M$ can be described as a bivector $\pi \in \X^2(M)$ which obeys the partial differential
equation $[\pi,\pi]=0$, where $[\cdot,\cdot]$ is the Schouten bracket of multivector fields. The image of the induced bundle
map $\pi^{\sharp}:T^{\ast}M\rightarrow TM$ is an involutive distribution, of possibly varying rank, each of
whose integral submanifolds carries an induced symplectic form.

Symplectic structures are those Poisson structures whose underlying foliation has $M^{2n}$ as its only leaf; equivalently,
they are those Poisson structures for which $\pi^{\sharp}$ is invertible -- i.e., for which $\bigwedge^n \p$ does not meet the zero section.

In \cite{GMP12}, this nondegeneracy condition has been relaxed in a very natural way, by demanding that $\bigwedge^n\p$ be
\emph{transverse} to the zero section instead of avoiding it:

\begin{definition} A Poisson manifold $(M^{2n},\pi)$ is of {\bf $b$-symplectic type} if $\bigwedge^n\pi$ is transverse
to the zero section $M \subset \bigwedge^{2n}TM$.
\end{definition}

Such structures were first defined, in the case of dimension two, by Radko \cite{Ra02},
who called them \emph{topologically stable} Poisson structures. Poisson structures of $b$-symplectic
type have also appeared under the name \emph{log symplectic} \cite{GuL,Cav13,MT1,MT2}.

Symplectic structures are those Poisson structures of $b$-symplectic type whose
{\bf singular locus} $Z(\p):=(\bigwedge^n\p)^{-1}M \subset M$ is empty. Quite crucially for what follows is that general
Poisson structures of $b$-symplectic type do not stay too far from being symplectic.

The transversality condition $\bigwedge^n\p \trans M$ ensures that the singular locus
$Z=Z(\p)$ is a codimension-one submanifold of $M$, which by the Poisson condition is itself foliated in codimension
one by symplectic leaves of $\p$.  Hence, $Z$ is a corank-one Poisson submanifold. Those vector fields $v \in \X(M)$ which are tangent to $Z$ form the space of
all sections of a vector bundle $\leftidx{^b}{T(M,Z)}{} \to M$, called the {\bf $b$-tangent bundle} \cite{Me93}.
The bundle $\leftidx{^b}{T(M,Z)}{}$ has a canonical structure of Lie algebroid, and a Poisson structure of
$b$-symplectic type $\pi$ on $M$ with
singular locus $Z$ can be described alternatively by  a closed, nondegenerate section of
$\bigwedge^2\left(\leftidx{^b}{T^*(M,Z)}{}\right)$, in complete analogy with the
symplectic case. This viewpoint motivates the nomenclature adopted in \cite{GMP12,NT96}
\footnote{Closed, nondegenerate sections of $\bigwedge^2\left(\leftidx{^b}{T^*(M,Z)}\right)$ were introduced in \cite{NT96}
for  $Z=\partial M$.}, and to which we adhere.  Henceforth, we will refer to Poisson structures of $b$-symplectic type as
{\bf $b$-symplectic} structures. With this perspective, it is not surprising that many tools from Symplectic Topology can be
adapted to this $b$-setting. In fact,
the purpose of this paper is to use such tools to discuss the following existence problems in $b$-symplectic geometry:

\begin{enumerate}
 \item[(P1)] Which manifolds $M$ carry a structure of $b$-symplectic manifold?
 \item[(P2)] Which corank one Poisson manifolds $Z$ appear as singular loci of closed $b$-symplectic manifolds ?
\end{enumerate}

For closed manifolds the answer to $($P1$)$ is unknown, even in the symplectic case. For \emph{open} manifolds (i.e., whose connected components either have non-empty boundary or are non-compact), we show:
 \begin{theorem1}
 An orientable, open manifold $M$ is $b$-symplectic if and only if $M \times \C$ is almost-complex.
\end{theorem1}

In fact, the story here is completely analogous to
the symplectic case: supporting a $b$-symplectic structure imposes restrictions on the de Rham cohomology
of a closed manifold \cite{Cav13,MT1},
but these do not apply to open manifolds. There, we show the existence
of $b$-symplectic structures is a purely homotopical question, which abides by a version of the $h$-principle of Gromov \cite{Gr86}. In some very special cases, the finer control granted by having an $h$-principle description allows one to prescribe the singular locus $Z$ of the ensuing $b$-symplectic manifold. (However, in the case where $Z$ bounds a compact region in $M$, these techniques break down completely.)

The singular locus $Z$ a $b$-symplectic manifold is not just a general corank-one Poisson manifold, in that it can be defined by a {\bf cosymplectic structure} $(\theta,\eta) \in \Omega^1(Z) \times \Omega^2(Z)$ (i.e., a pair of closed forms for which $\theta\wedge \eta^{n-1}$ is a volume form; cf. \cite[Proposition 10 ]{GMP12} or Section \ref{sec : Prescribing the singular locus of a b-symplectic manifolds}).
Thus, the Existence Problem $($P2$)$ is about describing which cosymplectic structures appear
as the singular locus of a closed  $b$-symplectic manifold.

Our first result says that $($P2$)$ can be rephrased as a problem of Symplectic Topology; namely,
that of  determining those (closed) cosymplectic manifolds which
admit symplectic fillings (see Section 2).

\begin{lemma}\label{realizable if and only if nullcobordant}  A cosymplectic manifold $(Z,\eta,\th)$ is
the singular locus of a closed, orientable $b$-symplectic manifold if and only if $(Z,\eta,\th)$ is
symplectically fillable.
\end{lemma}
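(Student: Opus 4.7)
The plan is to exploit the local normal form of a $b$-symplectic structure near its singular locus: on a tubular neighborhood $Z \times (-\varepsilon,\varepsilon) \hookrightarrow M$, any $b$-symplectic form with singular locus $Z$ and induced cosymplectic data $(\eta,\theta)$ may be brought to the canonical shape
\begin{equation*}
\omega_b \;=\; p^{\ast}\eta \;+\; \frac{dt}{t}\wedge p^{\ast}\theta,
\end{equation*}
where $p$ is the projection onto $Z$ and $t$ is the normal coordinate. The elementary observation driving both directions is that, on $\{t>0\}$, the substitution $s = \log t$ rewrites this as the symplectic collar model $p^{\ast}\eta + ds\wedge p^{\ast}\theta$ of a symplectic filling of $(Z,\eta,\theta)$.

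For the forward direction, assume $(M,\pi)$ closed with singular locus $Z$ and apply the normal form. Excise the open sub-neighborhood $Z\times (-\delta,\delta)$ for small $\delta>0$; the complement is a compact symplectic manifold whose boundary consists of one or two copies of $Z$ (depending on whether $Z$ separates $M$). On each collar of this boundary the change of variables $s = \pm \log(t/\delta)$ converts the symplectic form into the filling model $p^{\ast}\eta+ds\wedge p^{\ast}\theta$, so $(Z,\eta,\theta)$ is symplectically fillable.

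Conversely, let $(W,\omega)$ be a symplectic filling of $(Z,\eta,\theta)$, so that $\omega = p^{\ast}\eta + dt\wedge p^{\ast}\theta$ on a collar $[0,\varepsilon)\times Z$ of $\partial W = Z$. The construction is to double $W$ and reinstall a $b$-symplectic structure. Take two copies $W_\pm$ and glue them along $Z$ via the identity, producing a closed oriented manifold $M := W_+\cup_Z W_-$ with tubular neighborhood $(-\varepsilon,\varepsilon)\times Z$ of $Z$; the given symplectic forms then read $p^{\ast}\eta + dt\wedge p^{\ast}\theta$ for $t>0$ and $p^{\ast}\eta - dt\wedge p^{\ast}\theta$ for $t<0$. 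Choose a smooth function $\varphi : (-\varepsilon,\varepsilon)\setminus\{0\}\to \R$ satisfying $\varphi(t)=\log|t|$ in a punctured neighborhood of $0$, $\varphi(t)=|t|$ for $|t|\ge \delta$, and with $\varphi$ strictly monotone on each of $(-\varepsilon,0)$ and $(0,\varepsilon)$. Define
\begin{equation*}
\omega_b \;:=\; p^{\ast}\eta \;+\; d\varphi\wedge p^{\ast}\theta
\end{equation*}
on the tubular neighborhood, and extend by the original $\omega_\pm$ off the collar. By construction $\omega_b$ coincides with $\omega_\pm$ on $\{|t|\ge\delta\}$ and realises the $b$-symplectic normal form near $Z$, so it is a closed section of $\bigwedge^2(\leftidx{^b}{T^{\ast}(M,Z)}{})$, nondegenerate by the monotonicity of $\varphi$, whose singular locus is $Z$ with induced cosymplectic data $(\eta,\theta)$.

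The main technical point is the existence of the interpolation $\varphi$ together with the compatibility of its derivative with the symplectic forms on each side. This reduces to the elementary fact that $\log|t|$ and $|t|$ are smooth and strictly monotone in $|t|$ on each half-interval, so a monotone smooth interpolation exists; the orientation convention induced by the gluing ensures that the sign of $d\varphi$ matches $+dt$ on the $W_+$ side and $-dt$ on the $W_-$ side, so that $\omega_b$ truly extends the symplectic forms to a globally defined $b$-symplectic form on $M$.
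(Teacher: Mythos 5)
Your proof takes essentially the same route as the paper: excise an adapted collar around $Z$ to produce a symplectic filling, and, conversely, double the filling and reinstall a $b$-symplectic form via a monotone interpolation of $\log|t|$ with $\pm t$. The paper compresses the converse into a reference to its doubling construction (Corollary~\ref{cor:double} and the collar-gluing discussion in Section~\ref{sec:cosymp-cob}), which is precisely the interpolation $\varphi$ you write out explicitly.
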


Symplectic fillings of contact manifolds -- and more generally symplectic cobordisms with concave/convex boundaries --
are central to Symplectic Topology, whereas the case of cosymplectic (or flat) boundaries has received comparatively little attention. A notable exception is Eliashberg's result that $3$-dimensional {\bf symplectic mapping tori} (i.e., suspensions of a symplectomorphisms of surfaces) are symplectically fillable \cite{E04}.

Our second result follows from observing that symplectic fillability of all cosymplectic 3-manifolds is a consequence
of symplectic fillability of all symplectic mapping tori. This solves the cosymplectic existence problem in dimension 3:

\begin{theorem2}
Any cosymplectic manifold of dimension $3$ is the singular locus of orientable, closed, $b$-symplectic manifolds.
\end{theorem2}

Our third result describes a class of symplectomorphisms $\varphi$  which yield symplectically
fillable symplectic mapping tori in arbitrary dimensions:  namely, those built out of {\bf Dehn twists}  around parametrized Lagrangian spheres
(see Definition \ref{definition : dehn twist}) and their inverses:

\begin{theorem3}
If $Z$ is a symplectic mapping torus defined by a symplectomorphism which is Hamiltonian isotopic to
a word on Dehn twists and their inverses, then $Z$ is the singular locus of orientable, closed, $b$-symplectic
manifolds.
\end{theorem3}

\begin{remark} Symplectic fillability of a sympletic mapping torus is a property that only depends on the Hamiltonian isotopy class of the symplectomorphism.
 The key result in Eliashberg's argument in \cite{E04}
is that, for surfaces, the fillability of a symplectic mapping torus depends  just on the symplectic isotopy class of the symplectomorphism.
We do not know whether this is true
in higher dimensions.
\end{remark}

While this project was being completed the authors learned of research by G. Cavalcanti which
has some overlap with theirs. More precisely,
the idea of constructing $b$-symplectic manifolds  without boundary by gluing cosymplectic cobordisms
appeared independently in \cite{Cav13}.

 \begin{acknowledgements}  We would like to thank M. Crainic, R. Loja Fernandes, I. M\u{a}rcu\cb{t}, Y. Mitsumatsu,
  A. Mori, B. Osorno Torres, F. Presas and  G. Scott for useful conversations.
 \end{acknowledgements}

\section{Cosymplectic cobordisms and $b$-symplectic structures}\label{sec:cosymp-cob}

We summarize below basic facts and conventions about $b$-symplectic manifolds and cosymplectic
cobordisms, and describe the relation between both structures. For a more detailed account we refer the reader to \cite{GuL,GMP11,GMP12,MT2,Me93,NT96}.

\subsection{$b$-manifolds}

The Lie subalgebra $\X(M,Z)\subset \X(M)$ consisting of those vector fields $v$ which are tangent to $Z$ can be identified with the space of smooth sections of the {\bf $b$-tangent bundle} $\leftidx{^b}{T(M,Z)}{} \to M$.
 By its very construction, $\leftidx{^b}{T(M,Z)}{}$ comes equipped with a bundle map
 $\leftidx{^b}{T(M,Z)}{} \to TM$ covering $\id_M$, which is the identity outside $Z$.
 Its restriction to $Z$ defines an epimorphism $\leftidx{^b}{T(M,Z)}{}|_Z \to TZ$, whose kernel $\leftidx{^b}{N(M,Z)}{}$ has a canonical trivialization
 $\nu \in \Gamma(Z,\leftidx{^b}{N(M,Z)})$: if one expresses $Z$ locally as $x_1=0$ in a coordinate chart $(x_1,...,x_n)$,
 then $x_1\frac{\bd}{\bd x_1}$ is a no-where vanishing local section of $\leftidx{^b}{T(M,Z)}{}$ independent of choices along $Z$.

The bundle dual to $\leftidx{^b}{T(M,Z)}{}$ will be denoted by $\leftidx{^b}{T^{\ast}(M,Z)}{}$;
sections of its $p$-th exterior power will be called $b$-forms
(of degree $p$) on $(M,Z)$, and we write $\leftidx{^b}{\Omega^p(M,Z)}{}$ for the space of all such forms.

Since $\X(M,Z) \subset \X(M)$ is a Lie subalgebra, $\leftidx{^b}{T(M,Z)}{}$ has a natural structure of
Lie algebroid, and as such, it carries a differential
\[\leftidx{^b}{d}{}:\leftidx{^b}{\Omega^p(M,Z)}{} \to \leftidx{^b}{\Omega^{p+1}(M,Z)}{}\]
given by the usual Koszul-type formula.
Note that  agrees with $d$ outside $Z$, and that we have a short exact sequence of chain complexes:
\begin{equation}\label{diag : canonical exact sequence}
 0 \rmap (\Omega^{\bullet}(M),d) \rmap (\leftidx{^b}{\Omega^{\bullet}(M,Z)}{},\leftidx{^b}{d}{})
 \stackrel{\flat}{\rmap}(\Omega^{\bullet-1}(Z),d) \rmap 0,
 \end{equation}
where  $\flat$ maps a $b$-form $\w$ to its contraction with the canonical
$\nu$.

\subsection{Cosymplectic and $b$-symplectic structures}\label{subsec : b- and co-symplectic structures}

Mimicking the usual terminology,  $\omega\in \leftidx{^b}{\Omega^{2}(M,Z)}{}$  will be called  {\bf $b$-symplectic} if
$\omega^\mathrm{top}$ is nowhere vanishing and $\w$ is closed, $\leftidx{^b}{d}{}\omega=0$.

We recall from \cite[Proposition~20]{GMP12} that there is a bijective correspondence between $b$-symplectic forms on $(M,Z)$, and Poisson
 structures of $b$-symplectic type with singular locus $Z$. We can thus speak unambiguously of \emph{$b$-symplectic manifolds}; that is, if $(M,\pi)$ is
a Poisson manifold of $b$-symplectic type, then $(\p|_{M \diagdown Z(\pi)})^{-1}$ extends to a $b$-symplectic form $\w$ on the $b$-manifold $(M,Z(\pi))$.

Let then $(M,\w)$ be a $b$-symplectic manifold, with singular locus $Z \subset M$.
By the very definition, $\w|_{M \diagdown Z}$ is a symplectic manifold in the usual sense.

Before we describe the Poisson structure around points in the singular locus $Z$,
we recall from \cite{GMP11} how \emph{cosymplectic structures} appear in our context.
Recall that a if $(\theta,\eta) \in \W^1(Z) \times \W^2(Z)$ has the property
that $\theta \wedge \eta^{\mathrm{top}-1}$ is a volume form, then to such a pair there
corresponds a pair $(R,\nu) \in \X(Z) \times \X^2(Z)$, where $R$ is the {\emph Reeb} vector field
\[
  \iota_{R}\theta=1, \ \iota_R\eta = 0,\]
  and $\nu$ is characterized by:
  \[ \ \iota_{\theta}\nu = 0, \ \nu^{\sharp}\eta^{\sharp}+\theta \otimes R = \id_{TZ}.
\]Then:
\[
 d\theta = 0, \ d\eta =0 \ \ \ \iff \ \ [R,\nu]=0, \ [\nu,\nu]=0.
\]Hence from a Poisson-theoretic perspective, a cosymplectic structure $(\theta,\eta)$
is a pair $(R,\nu)$ consisting of a corank-one Poisson structure $\nu \in \Poiss(Z)$,
together with a Poisson vector field $R \in \X(Z)$ transverse to the leaves of $\nu$.

A natural way cosymplectic structures occur in Symplectic Geometry is as hypersurfaces transverse to symplectic vector fields.
The setting is the following: one is given a hypersurface $X$ in a symplectic manifold $(W,\W)$,
and we assume that on an open neighborhood $U \subset W$ of $X$ there exists a symplectic vector field $v \in \X(U)$ transverse to $X$. Denote by
\[
 \varphi : U \times \R \supset \dom(\varphi) \rmap W, \ \ \frac{d}{dt}\varphi_t(x) = v \circ \varphi_t(x)
\]the local flow of $v$, and consider the induced map:
\[
c : X \times \R \cap \dom(c) \rmap W, \ \ c(x, t) := \varphi_t(x).
\]Then the \emph{adapted collar} $c$ embeds in $W$ the open neighborhood
\[
V_{\e} \subset X \times \R, \quad V_{\e} := \{(x, t) |  -\e(x) < t < \e(x)\}
\]
for some small enough positive function $\e : U \to \R_+$, and $c$ pulls $\W$ back to:
\begin{align}\label{eq : symplectic normal form}
c^*\W = dt \wedge \theta +\eta,
\end{align} where $(\theta,\eta):=(c^*\iota_v\W|_X , c^*\W|_X)$  and we have abused notation and omitted the first projection map
on $V_{\e} \subset X \times \R$.


More to the point, a description similar to (\ref{eq : symplectic normal form}) exists for a $b$-symplectic
structure $\w$ in a neighborhood of its singular locus $Z(\w)$. When $Z(\w)$ is coorientable, it goes as follows:
start with any collar $c:V_{\e}\inc M$ extending $\id_{Z(\w)}$
and regard $c$ as a $b$-embedding $c:(V_{\e},Z(\w)) \inc (M,Z(\w))$. If we let $\theta$ stand for $\flat (c^{\ast}\w)$,
then $\widetilde{\eta}:=c^{\ast}\w-d \log |t| \wedge\theta$ is an honest closed two-form
$\widetilde{\eta} \in \W^2(V_{\e})$; in particular, $\eta:=\widetilde{\eta}|_{Z(\w)}$ makes sense as a closed two-form on $Z(\w)$,
and note that $(\theta,\eta)$ defines a cosymplectic structure on $Z(\w)$.

Define on $V_{\e}$ the $b$-symplectic form
 \begin{align}\label{eq : b-symplectic normal form, form}
  \w_0:=d \log |t| \wedge \theta + \eta
 \end{align}
and observe that the path of $b$-forms $\w_t:=\w_0 + t(c^{\ast}\w-\w_0)$ is $b$-symplectic
 on $V_{\e'}$, for some $0<\e'<\e$. Now, $c^{\ast}\w-\w_0 = d\alpha$, for some $\alpha \in \W^1(V_{\e'})$
 which vanishes along $Z(\w)$. Hence the time-dependent $b$-vector field $v_t$ defined by $\iota_{v_t}\w_t+\alpha=0$
 vanishes along $Z(\w)$, and hence the local flow of $v_t$ is defined up to time one on some $V_{\e''}$ for some $0<\e'' < \e'$,
 and satisfies $\phi_t^{\ast}\w_t=\w_0$.  Therefore, we obtain the \emph{adapted collar}:
 \[
\overline{c}:=c\phi_1 : V_{\e''} \inc M
 \] pulling $\w$ back to $\w_0$.
Note that
 in formula (\ref{eq : b-symplectic normal form, form}) the one-form $\theta$ is
 intrinsically defined, while $\eta$ depends on the collar, this choice not affecting its restriction to the symplectic distribution $\ker\theta$. Observe also that the inverse $\p_0:=\w_0^{-1} \in \Poiss(M)$ is given by:
\begin{align}\label{eq : b-symplectic normal form, form}
  \p_0:=t \frac{\bd}{\bd t}\wedge R + \nu,
 \end{align}where $(R,\nu)$ is the pair determined by $(\theta,\eta)$. In particular, on its singular locus $Z(\w)$, a $b$-symplectic form $\w$ determines
 a corank-one Poisson structure $\nu$ which comes from a cosymplectic structure $(\theta,\eta)$ on $Z(\w)$. This is still the case when $Z(\w)$ is not coorientable in $M$ (as in the example below), where a $\mathbb{Z}_2$-equivariant version of the normal form (\ref{eq : b-symplectic normal form, form}) can be proved for the pullback of $\w$ to the orientation covering of $M$.

\begin{example}[Radko's sphere]\label{ex : radko sphere and projective plane} The $b$-form $\omega= \frac{1}{h} dh\wedge d\theta$ on $\s^2$,
where $h,\theta$ stand for cylindrical coordinates, is $b$-symplectic. Its symplectic leaves are
either points in the equator $\s^1 \subset \s^2$, or components of $\s^2 \diagdown \s^1$. This $b$-form is invariant
under the antipodal map on the sphere and, thus, induces a $b$-symplectic form on the projective plane $\mathbb{RP}^2$ which is a non-orientable $b$-symplectic manifold.
\end{example}
\begin{figure}[h!]
\centering
\includegraphics[scale=0.5]{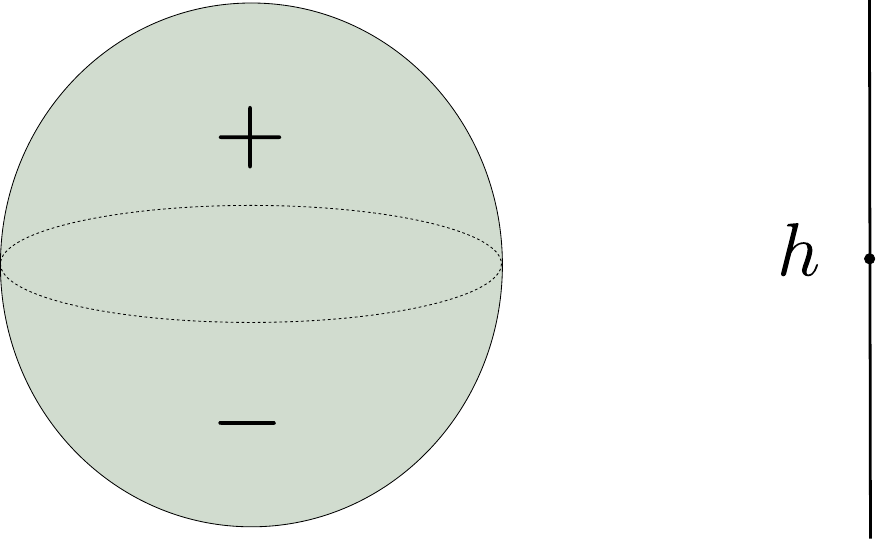}
\caption[]{Radko sphere $\s^2$ with the equator as critical hypersurface and the upper and lower
hemisphere as positive and negative symplectic
leaf, respectively.}
\label{fig:radkosphere}
\end{figure}

\subsection{Cobordisms}
An alternative perspective on $b$-symplectic manifolds is that they arise by gluing certain cobordisms in the symplectic category.

\begin{definition}
 A {\bf cosymplectic cobordism} is a compact symplectic manifold $(M,\w)$, together with a cosymplectic structure of the form $(\theta,\w|_{\bd M})$ on
 its boundary $\partial M$.
\end{definition}
As pointed out in the previous subsection, the cosymplectic structure $(\theta,\w|_{\bd M})$ is induced by a symplectic vector
field $v$ defined in a neighborhood of $\bd M$ and transverse to it.
A connected component $X$ of $\bd M$ is called \emph{incoming} or \emph{outgoing}
acoording to whether a such defining transverse symplectic vector field $v$ points into $M$ or out of it, respectively.

Observe that if $(M_0,\w_0,\theta_0)$ and $(M_1,\w_1,\theta_1)$ are cosymplectic cobordisms, and there exists a diffeomorphism:
\[
 \varphi : \bd M_0 \diffto \bd M_1, \quad \varphi^{\ast}\theta_1 = \theta_0, \ \ \varphi^{\ast}(\w|_{\bd M_1})=\w|_{\bd M_0},
\]then a $b$-symplectic manifold $(M_{01},\w_{01})$ and embeddings $\varphi:M_i \inc M$ exist, with:
\begin{itemize}
 \item $Z(\w_{01}) = \varnothing$ and
 \[
\varphi_0M_0 \cup \varphi_1M_1 = M_{01}
 \]if $\bd M_0$ is incoming and $\bd M_1$ is outgoing, or $\bd M_0$ is outgoing and $\bd M_1$ is incoming;
 \item $Z(\w_{01}) \simeq \bd M_0 \simeq \bd M_1$ and outside a neighborhood $U \simeq Z(\w_{01}) \times (-1,1)$ of $Z(\w_{01})$, we have:
 \[
 \varphi_0M_0 \coprod \varphi_1M_1 = M \diagdown U
 \]if $\bd M_0,\bd M_1$ are both incoming or both outgoing.
\end{itemize}In the first case one constructs collars as in (\ref{eq : b-symplectic normal form, form}) and glues the result into a symplectic manifold; in the second case the orientations do not match, and so one mediates the previous gluing by a collar of the form:
\[
 (Z(\w) \times [-1,1],df \wedge \theta + \eta),
\]where $f=f(t)$ is a monotone function with $df=d\log |t|$ around $t=0$ and $df=\pm dt$ around $t=\pm 1$.

Recall that the {\bf double} $M\# \overline{M}$ of an orientable manifold with boundary can be given, up to isomorphism, a unique smooth structure in which $M \inc M\# \overline{M}$ and $\overline{M} \inc M\# \overline{M}$ are smooth embeddings. An illustration of the natural way $b$-symplectic structures occur when trying to pass from compact symplectic manifolds to closed ones is given by the following result:

\begin{corollary}\label{cor:double}
 If $(M,\w,\theta)$ is a cosymplectic cobordism, then its double $M\# \overline{M}$ is a $b$-symplectic manifold.
\end{corollary}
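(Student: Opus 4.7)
The plan is to apply the gluing procedure from Section~\ref{sec:cosymp-cob} to two copies of $(M,\w,\theta)$, the second carrying the reversed orientation, and then to recognize the resulting smooth manifold as the double.

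First I would set $M_0:=M$ and $M_1:=\overline{M}$, with the same symplectic form $\w$ on both, so that each is a cosymplectic cobordism with boundary $(\bd M,\theta,\w|_{\bd M})$. Taking $\varphi:=\id_{\bd M}$ provides a boundary diffeomorphism satisfying $\varphi^{\ast}\theta=\theta$ and $\varphi^{\ast}(\w|_{\bd M})=\w|_{\bd M}$ on the nose, which is the matching datum required by the construction.

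Next I would note that whether a boundary component is incoming or outgoing depends only on whether the defining transverse symplectic vector field $v$ points into or out of the underlying manifold, which is a purely topological condition insensitive to orientation reversal. Hence $\bd M_0$ and $\bd M_1$ are simultaneously incoming or simultaneously outgoing, placing us in the second bullet of the gluing construction. Its output is a $b$-symplectic manifold $(M_{01},\w_{01})$ with $Z(\w_{01})\simeq \bd M$ and smooth embeddings $\varphi_0:M\inc M_{01}$, $\varphi_1:\overline{M}\inc M_{01}$ whose images exhaust $M_{01}$ outside a tubular neighborhood $U\simeq \bd M\times(-1,1)$ of the singular locus.

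Finally I would identify $M_{01}$ smoothly with $M\# \overline{M}$. Through the collar $U$ the embeddings $\varphi_0,\varphi_1$ extend smoothly across $Z(\w_{01})$, producing smooth inclusions of $M$ and $\overline{M}$ whose images cover $M_{01}$ and agree along $Z(\w_{01})=\bd M$. By the uniqueness (up to isomorphism) of the smooth structure on the double recalled just before the corollary, $M_{01}$ is diffeomorphic to $M\# \overline{M}$, and transporting $\w_{01}$ along this diffeomorphism yields the desired $b$-symplectic form. I do not expect any real obstacle: the argument is pure bookkeeping on top of the construction in Section~\ref{sec:cosymp-cob}, and the only subtlety worth flagging explicitly is the orientation-invariance of the incoming/outgoing designation, which is what makes the second case of the construction (rather than the first) apply here.
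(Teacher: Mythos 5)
Your proposal is correct and matches the paper's intended argument: one applies the gluing construction of the preceding subsection to $M_0=M$ and $M_1=\overline{M}$ with $\varphi=\id_{\partial M}$, and your observation that the incoming/outgoing designation of each boundary component is unchanged under orientation reversal is precisely what places the double in the second (nonempty singular locus) case. The paper leaves this implicit, stating the corollary without a separate proof immediately after the gluing discussion, and your identification of the resulting manifold with $M\#\overline{M}$ via uniqueness of the smooth double structure supplies the intended bookkeeping.
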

Of course, when the boundary orientations are opposite, the singular locus of the ensuing $b$-symplectic manifold is empty, and it is a consequence of Moser' s argument that its symplectomorphism type does not depend on the choice of collars.

For example, Radko's sphere is the \emph{b-symplectic double} of any closed symplectic disk $(\mathbb{D}^2,\w)$
endowed with a nowhere zero 1-form on $\mathbb{S}^1$
of period $2\pi$.

\begin{remark}\label{rem:uniqueness}
Just as in the symplectic case, a $b$-symplectic version of Moser's argument \cite{GMP12} shows that the $b$-symplectic type of a $b$-symplectic manifold obtained by gluing symplectic cobordisms with equal boundary orientations does not depend on the choices of collars.
\end{remark}

\section{$h$-principle}

In this section, we use standard $h$-principle arguments to provide a complete answer to the Existence
Problem $($P1$)$ under the additional assumption that $M$ is open.

A necessary condition for a manifold $M^{2n}$ to be symplectic is that it carry a nondegenerate
two-form, or, equivalently, an almost-complex structure. If $M$ is compact, we have a further necessary condition,
namely, that there be a degree-two cohomology class $\tau \in H^2(M)$ with $\tau^n \neq 0$.

For \emph{open} manifolds $M$ a classical theorem of Gromov \cite{Gr86} states that the sole obstruction to the existence of a symplectic structure is that $M$ be almost-complex. More precisely, given any non-degenerate two-form $\w_0$ and a cohomology class $\tau \in H^2(M)$, there is a path $\w :[0,1] \to \Omega^2(M)$ of nondegenerate two-forms connecting $\w_0$ to a closed two-form $\w_1$ representing $[\w_1] = \tau$.

We consider now the case of $b$-symplectic structures. Recall that $b$-symplectic manifolds need \emph{not}
be oriented as usual manifolds, so in particular they may fail to be almost-complex. However:

\begin{lemma}
 If an orientable $M$ admits a $b$-symplectic structure $\w$, then $M \times \C$ is almost-complex.
\end{lemma}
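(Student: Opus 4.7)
The plan is to produce a real vector bundle isomorphism $T(M\times\C) = TM\oplus\underline{\R}^2 \cong \leftidx{^b}{T(M,Z)}{}\oplus\underline{\R}^2$, and then transport along it the complex structure induced on the right-hand side by $\w$ together with the standard one on $\underline{\C}$.

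First, I would show that orientability of $M$, combined with the $b$-symplectic condition, forces the real line bundle $[Z]\to M$ associated to the divisor $Z$ to be trivial. A volume form coming from an orientation of $M$ trivializes $\det T^*M$, and $\w^n$ trivializes $\det(\leftidx{^b}{T^*(M,Z)}{})$; the local normal form $\w = d\log|t|\wedge\th+\eta$ exhibits the canonical isomorphism of line bundles $\det(\leftidx{^b}{T^*(M,Z)}{}) \cong \det T^*M \otimes [Z]$, so triviality of the two outer factors forces that of $[Z]$.

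Second, I would exhibit a short exact sequence of real vector bundles
\[
0 \to \leftidx{^b}{T(M,Z)}{} \xrightarrow{\iota} TM \oplus \underline{\R} \to [Z] \to 0.
\]
Pick a tubular neighborhood $U \cong Z \times (-\e,\e)$ of $Z$ with coordinate $t$, and a bump function $\chi: M \to [0,1]$ supported in $U$ with $\chi \equiv 1$ near $Z$. Using the decomposition $\leftidx{^b}{T(M,Z)}{}|_U = \p^*TZ \oplus \underline{\R}\{t\bd_t\}$, define $\iota(v + c\cdot t\bd_t) = (v + ct\bd_t,\, c\chi(t))$ on $U$, and set $\iota = (a,0)$ outside $U$, where $a$ is the anchor. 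These glue smoothly since the $U$-formula reduces to $(a,0)$ wherever $\chi$ vanishes, and $\iota$ is fiberwise injective because over $Z$ the second coordinate recovers the canonical direction $\nu = t\bd_t$ killed by the anchor. Comparing local trivializations of the cokernel on $M \setminus Z$ and on $U$ identifies it with $[Z]$.

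Third, since $[Z]$ is trivial by the first step, splitting the exact sequence via any Riemannian metric yields $TM \oplus \underline{\R} \cong \leftidx{^b}{T(M,Z)}{} \oplus \underline{\R}$, and hence $T(M\times\C) \cong \leftidx{^b}{T(M,Z)}{} \oplus \underline{\R}^2$. Any $\w$-compatible metric on $\leftidx{^b}{T(M,Z)}{}$ determines a complex structure $J_\w$ on it in the standard way, which together with the standard $J_0$ on $\underline{\R}^2 = \underline{\C}$ produces a complex structure on the right-hand side, transporting under the isomorphism to the desired almost-complex structure on $M\times\C$. The main technical point is the second step and specifically the identification of the cokernel with $[Z]$: one cannot hope for an isomorphism $TM \cong \leftidx{^b}{T(M,Z)}{}$ directly, since the anchor degenerates across $Z$, but the trivial factor $\underline{\R}$ on the target of $\iota$ is precisely what accommodates the $\nu$-direction the anchor kills, while triviality of $[Z]$ secures triviality of the resulting cokernel.
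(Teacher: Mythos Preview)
Your argument is correct. The paper does not actually prove this lemma: it states only that the proof is ``a straightforward adaptation of the argument in \cite[\S 4]{CanGuiWoo}'' and omits the details. Your proof is exactly such an adaptation: in \cite{CanGuiWoo} one shows, for a folded symplectic form, that $TM\oplus\underline{\R}^2$ is isomorphic to a bundle carrying a complex structure, and you have carried out the analogous stable comparison with $\leftidx{^b}{T(M,Z)}{}$ in place of the folded tangent bundle. One minor remark: your explicit construction of $\iota$ in the second step uses a product tubular neighborhood $Z\times(-\e,\e)$, which already presupposes coorientability of $Z$; this is harmless here, since you only invoke the exact sequence after having established in the first step that $[Z]$ is trivial.
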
The proof is a straightforward adaptation of the argument in \cite[\S 4]{CanGuiWoo}, which we omit.

Just as in the symplectic case, if we demand that $M$ be compact, the existence of a $b$-symplectic structure is obstructed:
\begin{enumerate}
 \item there exists a cohomology
 class $\tau\in H^2(M)$ with $\tau^{n-1} \neq 0$  \cite{MT1};
 \item furthermore, if $M$ is  orientable, a non-trivial $\vartheta \in H^2(M)$
 must exist squaring to zero \cite{Cav13}.
\end{enumerate}
   None of these obstructions appear when $M$ is open, so one wonders if, in that case, $M \times \C$ being
   almost-complex is sufficient to ensure that $M$ carries a $b$-symplectic structure. We answer the question in the affirmative:


\begin{theorem1}\label{thm : open, stably complex is b-symplectic}
 Let $M$ be an orientable, open manifold. Then $M$ is $b$-symplectic if and only if $M \times \C$ is almost-complex.
\end{theorem1}

We need to introduce the analogs of nondegenerate two-forms. To do that, observe that
a bivector $\pi\in\mathfrak{X}^2(M^{2n})$ whose top exterior power $\bigwedge^n\pi$ is transverse to the zero section defines
a $b$-manifold $\left(M,Z(\pi)\right)$, $Z(\pi):=(\bigwedge^n\pi)^{-1}M\subset M$. Since
$b$-bivectors $\X^2(M,Z(\p))$ sit inside the space of all bivectors $\X^2(M)$, it makes sense to require
$\p$ to be a $b$-bivector in the $b$-manifold it defines (a simple
coordinate check shows this is not true in general).

\begin{definition}
  A bivector $\pi \in \X^2(M)$ is {\bf $b$-serious} if it is transversally nondegenerate and a $b$-bivector in $(M,Z(\p))$.
\end{definition}

In the sequel we show that $b$-serious bivectors can be homotoped into $b$-symplectic ones, provided that the manifold be open:

\begin{theorem}\label{homotoping b-serious to poisson}
On an \emph{open} manifold $M$, a $b$-serious bivector $\p_0$ is homotopic through $b$-serious bivectors to a
Poisson bivector $\p_1$. Moreover, one can arrange that $Z(\p_1)$
be non-empty if $Z(\p_0)$ is non-empty.
\end{theorem}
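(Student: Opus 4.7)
The plan is to dualize to the language of $b$-forms on a fixed $b$-manifold $(M,Z)$ with $Z:=Z(\pi_0)$, and then apply Gromov's $h$-principle for closed nondegenerate 2-forms in this algebroid setting.

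First, since $\pi_0$ is $b$-serious, it is a nondegenerate section of $\bigwedge^2 \leftidx{^b}{T(M,Z)}{}$; dualizing via the isomorphism $\pi_0^\sharp$, this corresponds to a nondegenerate $b$-form $\omega_0\in \leftidx{^b}{\Omega^2(M,Z)}{}$, with the Poisson condition $[\pi,\pi]=0$ equivalent to $\leftidx{^b}{d}{}\omega=0$. Conversely, any nondegenerate $\omega\in \leftidx{^b}{\Omega^2(M,Z)}{}$ has nowhere-vanishing $b$-top-form $\omega^n$, which, viewed as an ordinary $2n$-form on $M$, vanishes transversally along $Z$; hence its inverse is automatically a $b$-serious bivector with singular locus precisely $Z$. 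The problem therefore reduces to constructing a path $\omega_t\in \leftidx{^b}{\Omega^2(M,Z)}{}$ of nondegenerate $b$-forms, starting at $\omega_0$ and ending at a $\leftidx{^b}{d}{}$-closed one, keeping the underlying $b$-manifold $(M,Z)$ fixed.

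Next, I would invoke the $h$-principle. Recall that on an open manifold Gromov's theorem \cite{Gr86} asserts that the inclusion of closed nondegenerate 2-forms into nondegenerate 2-forms is a weak homotopy equivalence. The ingredients of its proof transfer directly to the $b$-setting: $\leftidx{^b}{T(M,Z)}{}$ is an honest vector bundle, so nondegeneracy is still an open condition; $\leftidx{^b}{d}{}$ is a linear first-order operator, so closedness is a well-posed differential relation; the relation is invariant under diffeomorphisms preserving $Z$; and $M$ being open furnishes a codimension-$\geq 1$ spine $K\subset M$, which can be arranged to be compatible with the codimension-one properly embedded submanifold $Z$. Using the local normal form (\ref{eq : b-symplectic normal form, form}) as the canonical closed building block near $Z$ and the classical symplectic local models away from $Z$, a Moser/covering-homotopy construction first deforms $\omega_0$ through nondegenerate $b$-forms on a neighborhood of $K$ to a closed $b$-form, and then spreads the deformation to all of $M$ by retraction.

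Dualizing the resulting $\omega_t$ yields $\pi_t:=\omega_t^{-1}$, a homotopy through $b$-serious bivectors on $(M,Z)$ from $\pi_0$ to a Poisson $\pi_1$. Since the $b$-structure is held fixed throughout, $Z(\pi_1)=Z=Z(\pi_0)$, giving the moreover clause. The main obstacle lies in the adaptation of Gromov's open-manifold $h$-principle to the algebroid context: arranging the spine to absorb $Z$ while preserving codimension, and handling the $\log|t|$ singularity during the Moser interpolation so that the deformation stays within genuine $b$-forms. Both are resolved by the local normal form recalled in Section \ref{sec:cosymp-cob}, which gives a canonical closed reference model along $Z$ to interpolate towards.
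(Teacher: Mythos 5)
Your reduction to $b$-forms is correct, but the plan to \emph{keep the underlying $b$-manifold $(M,Z)$ fixed} is exactly where the argument breaks. Even when $M$ is open, the singular locus $Z = Z(\pi_0)$ can be a \emph{closed} hypersurface, and a closed nondegenerate $b$-form on $(M,Z)$ would induce a cosymplectic structure $(\theta,\eta)$ on $Z$ (via the normal form and the short exact sequence (\ref{diag : canonical exact sequence})). For compact $Z$, this forces nontrivial cohomology in every degree of $Z$. A $b$-serious bivector, on the other hand, is a purely pointwise/algebraic datum and imposes no such topological restriction on $Z$; so the target of the homotopy you are proposing --- a $\leftidx{^b}{d}{}$-closed nondegenerate $b$-form with singular locus exactly $Z$ --- need not exist at all. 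No amount of ``arranging the spine to absorb $Z$'' fixes this, because the issue is a global cohomological obstruction on $Z$, not a local normal-form matter.

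The paper handles precisely this by \emph{not} fixing $Z$: the compression $g_t : M \hookrightarrow M$ does not preserve $Z_0$, so $g_t$ is regarded as a family of $b$-maps $g_t:(M,Z_t)\to(M,Z_0)$ with $Z_t := g_t^{-1}Z_0$. The holonomic approximation is carried out near the wiggled core $h_1(K)$ on $(M,Z_0)$, and the pullback $g_1^*\alpha$ then lives on the new $b$-manifold $(M,Z_1)$. The singular locus $Z_1$ is typically not diffeomorphic to $Z_0$ --- it may even become disconnected --- and this drastic change is what makes a closed model attainable. (The paper's remark phrases it as ``blowing to infinity'' the parts of $Z_0$ where approximation by closed $b$-forms is obstructed.) You should replace the step ``closedness is a well-posed differential relation on a fixed $(M,Z)$, now apply Gromov'' with the jet-lift-plus-compression scheme of Eliashberg--Mishachev, tracking the moving $Z_t$ throughout; the ``moreover'' clause about $Z(\pi_1)\neq\varnothing$ then comes from arranging the core $K$ and the wiggled core $h_1(K)$ to meet $Z_0$, not from preserving $Z$ outright.
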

This statement is a result of checking that $1$-jets of Poisson bivectors of exact $b$-symplectic type form a microflexible differential relation, invariant under the pseudogroup of local diffeomorphisms of $M$, cf. \cite{Gr86}. We opted instead to follow the somewhat more visual scheme of proof of \cite{EM}.
\begin{proof}
Take $\p_0 \in \X^2_{\trans}(M)$ $b$-serious, $\p_0 \in \X(M,Z_0)$, and let $\w_0 \in \leftidx{^b}{\Omega^2(M,Z_0)}{}$ the corresponding $b$-symplectic form.

Observe that the $b$-differential $\leftidx{^b}{d}{}:\leftidx{^b}{\Omega^{p}(M,Z_0)}{} \rmap \leftidx{^b}{\Omega^{p+1}(M,Z_0)}{}$
can be factored as a composition $\leftidx{^b}{d}{}=\widetilde{\symb}(\leftidx{^b}{d}{})\circ j_1$, where $j_1$ denotes the $1$-jet map
\begin{gather*}
 j_1:\Gamma(M,\bigwedge^p\leftidx{^b}{T^{\ast}(M,Z_0)}{}) \rmap \Gamma(M,J_1\bigwedge^p\leftidx{^b}{T^{\ast}(M,Z_0)}{})
\end{gather*}
and
\[
\widetilde{\symb}(\leftidx{^b}{d}{}):\Gamma(M,J_1\bigwedge^p\leftidx{^b}{T^{\ast}(M,Z_0)}{}) \rmap \Gamma(M,\bigwedge^{p+1} \leftidx{^b}{T^{\ast}(M,Z_0)}{})
\]is induced by a bundle map
\[
 \symb(\leftidx{^b}{d}{}):J_1\bigwedge^p \leftidx{^b}{T^{\ast}(M,Z_0)}{} \rmap \bigwedge^{p+1}\leftidx{^b}{T^{\ast}(M,Z_0)}{}.
\]
As one easily checks, $\symb(\leftidx{^b}{d}{})$ is an epimorphism with contractible fibres; in particular,
we can lift $\w_0$ to $\widetilde{\w}_0 \in \Gamma(M,J_1\overset{p}{\bigwedge}\leftidx{^b}{T^{\ast}(M,Z_0)}{})$.

Now, since $M$ is an open manifold, there exists a a subcomplex $K$ of a smooth triangulation of $M$, of positive codimension,
with the property that, for an arbitrarily small open $U \subset M$ around $K$, there exists an isotopy of open
embeddings $g_t:M \inc M$, $g_0=\id_M$, with $g_1(M) \subset U$ and $g_t|_K=\id_K$. We will refer to $K$ as a core of $M$,
and say that $g_t$  compresses $M$ into $U$. Note in passing that one can always find a core $K$ of $M$ meeting $Z_0$.

Fix then a core $K$ of $M$, and a compression of $M$ into an open $U$ around $K$. The Holonomic Approximation theorem of
\cite{EM} then says that we can find
\begin{itemize}
 \item an isotopy $h_t$ of $M$ mapping $K$ into $U$;
 \item an open $V \subset U$ around $h_1(K)$;
 \item a section $\alpha \in \Gamma(V,\leftidx{^b}{T^{\ast}(M,Z_0)}{})$
\end{itemize}such that $j_1\alpha$ is so $C^0$-close to $\widetilde{\w}_0$ that we can find a homotopy
\[
 \widetilde{\w}(t) \in \Gamma(V,J_1\leftidx{^b}{T^{\ast}(M,Z_0)}{}),
\]
connecting $\widetilde{\w}_0|_V$ to $j_1\alpha$, and with $\widetilde{\symb}(\leftidx{^b}{d}{})\widetilde{\w}_t$
nondegenerate $b$-forms on $V$. Moreover, the scheme of proof in \cite{EM} shows that one can require in addition that $h_1(K)$ also meet $Z_0$.

Now regard the compression $g_t$ as a smooth family of $b$-maps\[g_t:(M,Z_t)\rmap (M,Z_0), \quad Z_t:=g_t^{-1}Z_0,\]
and set $\w_1:=\leftidx{^b}{d}{}(g_1^{\ast}\alpha) \in \leftidx{^b}{\Omega^2(M,Z_1)}{}$.
Observe now that $\widehat{\w}^1_t:=g_t^{\ast}\widetilde{\w}_0$ connects $\widetilde{\w}_0$ to $g_1^{\ast}(\widetilde{\w}_0|_V)$,
and $\widehat{\w}^2_t:=g_1^{\ast}\widetilde{\w}(t)$ connects $g_1^{\ast}(\widetilde{\w}_0|_V)$ to a lift of $\w_1$.
Let $\widehat{\w}_t$ denote the concatenation of $\widehat{\w}^1_t$ and $\widehat{\w}^2_t$:
\[
 \widehat{\w}_t:=\begin{cases}
                  \widehat{\w}^1_{2t} & 0\leqslant t \leqslant 1/2,\\
		  \widehat{\w}^2_{2t-1} & 1/2\leqslant t \leqslant 1.
                 \end{cases}
\]Then $t \mapsto \p_t:=\widehat{\w}_t^{-1} \in \X(M,Z_t)$ defines a homotopy of $b$-serious bivectors between $\p_0$ and a Poisson $\p_1$.
\end{proof}

A few remarks are in order:
\begin{itemize}
 \item If $\w_0$ could be $C^0$-approximated by a closed $\w_1 \in \leftidx{^b}{\Omega^2(M,Z_0)}{}$, we would be done.
 However, such an approximation is severely obstructed, in that it would imply that $Z_0$ admits a structure of cosymplectic
 manifold.(For compact $Z$, the existence of a cosymplectic structure implies e.g. that it has non-trivial cohomology in all possible degrees).
 \item We get around this problem by changing the topology of $Z_0$ rather drastically; observe in particular that $Z_1$
 may be disconnected even if $Z_0$ is connected. One should perhaps think of $Z_1$ as $Z_0$ with those places `blown to infinity'
 where $\w_0$ cannot be approximated by closed $b$-forms.
 \item Of course, when $M$ is itself almost-complex, Gromov's theorem allows us to produce an honest symplectic structure.
 \item If in the statement of Theorem \ref{homotoping b-serious to poisson} we further assume that:
 \begin{itemize}
  \item $Z=Z(\p_0)$ is a regular fibre $f^{-1}(0)$ of a proper Morse function $f:M \to \R$, unbounded from above and from below, and
 \item $\w_0$ is already $\leftidx{^b}{d}{}$-closed around $Z$,
 \end{itemize}
then one can impose that the homotopy $\p_t$ above be stationary around $Z$ \cite[Theorem 7.2.4]{EM}.
\end{itemize}
This last comment can be regarded as a sufficient condition to realize a given cosymplectic structure on $Z$
on a \emph{given} manifold $M$:

\begin{corollary}
 Any given cosymplectic structure on the regular fibre $Z$ of a proper Morse function $f:M \to \R$ can be realized
 as the singular locus of a $b$-symplectic structure, provided $f$ is unbounded from above and from below.
\end{corollary}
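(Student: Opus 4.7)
The plan is to combine the local model for $b$-symplectic forms from Section~\ref{subsec : b- and co-symplectic structures} with the relative version of Theorem~\ref{homotoping b-serious to poisson} highlighted in the final bullet of the preceding discussion. First I would use the gradient flow of $f$ to identify a tubular neighborhood $U$ of $Z$ with $Z \times (-\e,\e)$, $Z=\{t=0\}$, and on $U$ define the closed $b$-symplectic form
\[
\w_0 := d\log|t| \wedge \theta + \eta.
\]
By the normal form discussion of Section~\ref{subsec : b- and co-symplectic structures}, this $\w_0$ realizes the given pair $(\theta,\eta)$ as the cosymplectic structure on $Z$, and the bivector $\p_0 := \w_0^{-1}$ is Poisson of $b$-symplectic type on $U$.

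Next I would extend $\p_0$ from $U$ to a $b$-serious bivector on all of $M$. Since $f$ is proper and unbounded above and below, $M \diagdown \overline{U}$ splits into two open manifolds $M_\pm = \{\pm f > \e/2\}$, each meeting $U$ along a collar. On each $M_\pm$ the germ of $\w_0$ along this collar is a germ of almost-symplectic form, which I would extend to an almost-symplectic form on the whole $M_\pm$. The relevant obstruction is almost-complex in nature: $TM|_Z = \ker\theta \oplus \la R\ra \oplus \nu_Z$ carries a canonical almost-complex structure (since $\ker\theta$ is symplectic under $\eta$ and $\la R\ra \oplus \nu_Z$ is a trivial rank-two bundle, $\nu_Z$ being trivial by regularity of the fibre), and openness of $M_\pm$ is the natural setting in which to extend this germ from a collar of $\bd M_\pm$ to the interior.

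Once such a $\p_0 \in \X^2(M)$ is in place --- $b$-serious, with $Z(\p_0)=Z$, and Poisson on a neighborhood of $Z$ --- I would invoke the relative version of Theorem~\ref{homotoping b-serious to poisson} (as in the last bullet above, citing \cite[Theorem~7.2.4]{EM}) to homotope $\p_0$ through $b$-serious bivectors, stationary near $Z$, to a Poisson bivector $\p_1$. Because $\p_1 = \p_0$ on a neighborhood of $Z$, we obtain $Z \subseteq Z(\p_1)$, and the cosymplectic structure that $\p_1$ induces on $Z$ is exactly $(\theta,\eta)$.

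The hard part is the extension of $\p_0$ across $M_\pm$: even though each $M_\pm$ is an open manifold and so accessible to $h$-principle techniques, propagating the almost-complex germ through the critical levels of $f|_{M_\pm}$ is where care is needed. A natural approach is to build $\p_0$ inductively along the Morse decomposition of $M_\pm$, extending across one Morse handle at a time; the hypothesis that $f$ be unbounded both ways provides precisely the room for these inductive extensions to proceed in the spirit of Gromov's $h$-principle.
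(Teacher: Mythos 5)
Your structure --- put the collar model $\w_0=d\log|t|\wedge\theta+\eta$ near $Z$, extend $\p_0=\w_0^{-1}$ to a $b$-serious bivector on all of $M$, and apply the relative Holonomic Approximation Theorem of \cite[Theorem~7.2.4]{EM} to homotope it, stationary near $Z$, to a genuine $b$-symplectic Poisson structure --- is exactly what the paper intends: the corollary carries no separate proof because it is meant to follow immediately from the last bulleted remark after Theorem~\ref{homotoping b-serious to poisson}. But the middle step, which you honestly flag as ``the hard part,'' is a gap you do not close and cannot close in the generality you suggest. The assertion that ``openness of $M_\pm$ is the natural setting in which to extend'' the germ conflates the existence of a \emph{formal} solution with the $h$-principle's ability to make a formal solution holonomic. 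The $h$-principle supplies only the latter; extending the almost-complex structure (equivalently, the nondegenerate two-form) from the collar across the handles of $f|_{M_\pm}$ is a relative obstruction-theoretic problem that has nothing to do with flexibility and can genuinely fail. No inductive handle-by-handle scheme makes it go through unconditionally. The correct reading is that the corollary, like Theorem~\ref{homotoping b-serious to poisson} which it refines, silently \emph{starts from} a $b$-serious bivector $\p_0$ on all of $M$ whose singular locus is $Z$ and which is $b$-closed near $Z$; you should state this hypothesis rather than implying the extension always exists.

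A smaller issue: you conclude $Z \subseteq Z(\p_1)$, but the definition in Section~\ref{sec : Prescribing the singular locus of a b-symplectic manifolds} requires that $Z$ be the \emph{entire} singular locus. In the proof of Theorem~\ref{homotoping b-serious to poisson} the new singular locus is $g_1^{-1}Z_0$, which a priori can acquire extra components under the compression $g_1$. To ensure $Z(\p_1)=Z$ exactly, one must choose cores and compressions for the two halves $f^{-1}(\R_{>0})$ and $f^{-1}(\R_{<0})$ separately, stationary near $Z$ and never crossing it --- and this is precisely what the ``unbounded in both directions'' hypothesis on $f$ buys you. Spelling this out is worthwhile, since otherwise that hypothesis looks decorative.
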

\begin{proof}[Proof of Theorem A]
 It remains to show that the existence of an almost-complex structure on $M \times \C$ ensures the existence of a
 $b$-serious bivector. But according to \cite{Can10}, the former guarantees the existence of a folded symplectic
 form $\phi \in \Omega^2(M)$, namely, a closed two-form, whose top power is transverse to the zero section,
  thus defining a smooth folding locus $Z=({\phi^n})^{-1}M \subset M$,
 along which $\phi^{n-1}$ does not vanish. For every one-form $\theta \in \W^1(Z)$ satisfying $\theta \wedge \phi|_Z^{n-1} \neq 0$ everywhere, one can construct an open embedding
 \[
  c:C \inc M, \quad c^{\ast}\phi = d\left(\frac{t^2}{2}\theta\right) + \phi|_Z,
 \]following the recipe in \cite[Theorem 1]{CanGuiWoo}, where $C \subset Z \times \R$ is an open neighborhood of $Z$.

Fix a Riemannian metric $g$ on $M$ with $c^{\ast}g:=g_Z+dt^2$, and denote by
 $\p$ the bivector on $M$ which is $g$-dual to $\phi$; then
 \[
  c^{\ast}\pi = t\frac{\bd}{\bd t} \wedge R + \nu,
 \]where
 \[
  R= g^{\flat}(\theta), \quad \nu = g^{\flat}\left( \phi|_Z+t^2/2 d\theta\right)
 \]has no $\frac{\bd}{\bd t}$-component. Therefore, $\p$ is $b$-serious, and has singular locus $Z$.
\end{proof}

\section{Prescribing the singular locus of a $b$-symplectic manifold}\label{sec : Prescribing the singular locus of a b-symplectic manifolds}

The Existence problem $($P2$)$ asks for the description of those corank 1 Poisson manifolds that appear as singular locus of a $b$-symplectic manifold:

\begin{definition} A cosymplectic manifold is the singular locus of a $b$-symplectic manifold $(M,\w)$,
 if it is diffeomorphic to $Z(\w)$, endowed with the cosymplectic structure induced by some adapted collar.
 \end{definition}

Thus, our problem splits into two questions: Firstly, deciding which corank-one Poisson structures come
from cosymplectic structures, a matter discussed in  \cite{GMP11}. Secondly, describing which (connected) cosymplectic manifolds are singular loci of $b$-symplectic manifolds, which is the question we will tackle.

Note that the question is trivial if either:
\begin{itemize}
\item the $b$-symplectic manifold is not required to have empty boundary (e.g., a closed, adapted collar associated to a cosymplectic manifold $(Z,\theta,\eta)$ would provide such a realization), or
\item if we do not require that the cosymplectic manifold be the entire singular locus of $(M,\w)$ (e.g., the double of the adapted collar is a $b$-symplectic manifold without boundary, whose singular locus consists of \emph{two} copies of $(Z,\theta,\eta)$).
\end{itemize}

We shall therefore assume that all cosymplectic manifolds appearing henceforth are compact and connected.

\begin{definition}
 Let $(Z_0,\eta_0,\theta_0)$, $(Z_1,\eta_1,\theta_1)$ be cosymplectic manifolds.
 We say that $Z_0$ is {\bf cosymplectic cobordant} to $Z_1$ if there exists a cosymplectic
 cobordism $(M,\w,\theta)$ and diffeomorphisms of cosymplectic manifolds
\[
\varphi_0 : (\bd_{\inco} M,\w|_{\bd_{\inco} M}, \th)
\diffto (Z_0,\eta_0,\th_0), \quad \varphi_1 : (\bd_{\out} M,\w|_{\bd_{\out} M}, \th) \diffto (Z_1,\eta_1,\th_1).
\]
\end{definition}

A cosymplectic manifold $(Z,\eta,\th)$ will be called {\bf symplectically fillable} if it is cosymplectic cobordant to the empty set.

\begin{lemma}\label{realizable if and only if nullcobordant} A cosymplectic manifold
is the singular locus of a closed, oriented $b$-symplectic manifold
if and only if it is symplectically fillable.
\end{lemma}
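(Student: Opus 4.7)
The argument splits into the two implications of the lemma.

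For the ``if'' direction (fillable implies singular locus), given a symplectic filling $(W,\Omega,\theta)$ of $(Z,\eta,\theta)$---that is, a cosymplectic cobordism with empty incoming boundary and outgoing boundary isomorphic to $(Z,\eta,\theta)$---I would form the $b$-symplectic double by gluing two copies $W_1,W_2$ of $W$ along their common $Z$-boundary. Since both boundaries are outgoing, this is the second-case gluing described in Section~\ref{sec:cosymp-cob}: one interpolates between the two symplectic collars of $W_1,W_2$ by a $b$-symplectic collar of the form $(Z\times[-1,1],\,df\wedge\theta+\eta)$, where $f=f(t)$ satisfies $df=d\log|t|$ near $t=0$ (producing the singular hypersurface $Z\times\{0\}$) and $df=\pm dt$ near $t=\pm 1$ (so that the form matches the symplectic collars of $W_1,W_2$). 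The result is a closed oriented $b$-symplectic manifold whose singular locus, with the cosymplectic structure inherited from any adapted collar, is isomorphic to $(Z,\eta,\theta)$.

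For the ``only if'' direction, start with a closed oriented $b$-symplectic $(M,\omega)$ with singular locus $(Z,\eta,\theta)$. The adapted-collar normal form of Section~\ref{sec:cosymp-cob} gives an embedding $c:Z\times(-\epsilon,\epsilon)\hookrightarrow M$ with $c^{\ast}\omega=d\log|t|\wedge\theta+\eta$, and I excise a small sub-collar to obtain the compact symplectic manifold $W:=M\setminus c(Z\times(-\delta,\delta))$. The vector field $v:=t\partial_t$ is symplectic on the collar (since $\iota_v c^{\ast}\omega=\theta$ is closed) and points outward on both components of $\partial W$; hence each boundary component is exhibited as an outgoing cosymplectic boundary diffeomorphic to $(Z,\eta,\theta)$, so that every connected component of $W$ is a cosymplectic cobordism from $\varnothing$ to one or two copies of $(Z,\eta,\theta)$.

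When $Z$ separates $M$, $W$ decomposes as $W_+\sqcup W_-$ and each piece is a symplectic filling of $(Z,\eta,\theta)$, concluding the argument. The main obstacle is the non-separating case, where $W$ is connected with $\partial W\cong(Z,\eta,\theta)\sqcup(Z,\eta,\theta)$, giving a filling of the disjoint union rather than of a single copy. To overcome this, I would reduce to the separating case by producing an auxiliary closed $b$-symplectic manifold in which $(Z,\eta,\theta)$ appears as a separating singular locus---for instance by self-gluing two copies of $W$ along one of their matched boundary components via the second-case gluing of Section~\ref{sec:cosymp-cob}, thereby exchanging non-separating topology for a cleaner configuration---and then applying the separating-case argument. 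Carrying out this reduction while keeping the cosymplectic data intact is the most delicate point of the proof.
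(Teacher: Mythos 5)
Your ``if'' direction reproduces the paper's argument exactly (double the filling via the second-case gluing of Section~\ref{sec:cosymp-cob}), and in the ``only if'' direction your collar-excision setup in the separating case is also the paper's proof. The error is in treating the ``non-separating case'' as the crux of the matter: under the lemma's hypotheses that case never occurs. Since $M$ is orientable, $\bigwedge^{2n}TM$ is trivial, so $\bigwedge^n\pi$ can be regarded as a function $f\colon M\to\R$ vanishing transversally on $Z=f^{-1}(0)$. The disjoint open sets $\{f>0\}$ and $\{f<0\}$ are both nonempty near $Z$ and cover $M\setminus Z$, so $Z$ separates $M$; and since $Z$ is connected (the paper's standing convention on cosymplectic manifolds), $W=M\setminus c\bigl(Z\times(-\delta,\delta)\bigr)$ falls into two pieces, one inside $\{f>0\}$ and one inside $\{f<0\}$, each a cosymplectic cobordism from $(Z,\eta,\theta)$ to $\varnothing$. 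This is exactly the content of the paper's phrase ``a cosymplectic cobordism with two connected components'', and it is what the hypothesis ``oriented'' buys: for Radko's $\mathbb{RP}^2$ the singular circle $\mathbb{RP}^1$ does \emph{not} separate, so orientability is genuinely being used. Your proposed self-gluing reduction is therefore unnecessary, and --- as you yourself flag --- it is unclear it can be carried out while preserving the cosymplectic data.

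A small separate correction: at $t=\pm\delta$ the vector field $t\partial_t$ points \emph{into} $W$, not outward, so the two boundary components are incoming rather than outgoing; this is in fact what fillability asks for, since a filling of $(Z,\eta,\theta)$ is by definition a cosymplectic cobordism with $(Z,\eta,\theta)$ at the incoming end and $\varnothing$ at the outgoing end.
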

\begin{proof}
If $(Z,\theta,\eta)$ is the singular locus of $(M,\w)$, then after removing an open, adapted collar
inducing the cosymplectic structure, we obtain a cosymplectic cobordism with two connected components, each of which is a cosymplectic cobordism from $(Z,\theta,\eta)$ to the empty set.

Conversely, if we have a cosymplectic cobordism from $(Z,\theta,\eta)$ to the empty set, then $(Z,\theta,\eta)$ is the singular locus of the double of the cobordism.
\end{proof}

Since cosymplectic cobordisms can be composed, any cosymplectic structure cobordant to a fillable one is also fillable. Thus, it is important to discuss some basic constructions of cosymplectic cobordisms.

\begin{lemma}\label{lem : cosymp cob}
Two cosymplectic structures $(\eta_0,\theta_0)$, $(\eta_1,\theta_1)$ on $Z$ are cobordant if there is a homotopy
$(\eta_t,\th_t)$ of cosymplectic structures joining them, and $[\eta_t]$ is constant.

In particular, among cosymplectic structures on $Z$ symplectic fillability is stable under rescaling of the 1-form by a positive scalar,
small $C^0$-perturbations of the 1-form, and under
replacement of the 2-form by a cohomologous one.
\end{lemma}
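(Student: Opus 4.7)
The plan is to construct a cosymplectic cobordism on the cylinder $M = Z \times [0,N]$ for a sufficiently large parameter $N > 0$, with the homotopy $(\eta_t,\theta_t)_{t\in[0,1]}$ serving as interpolation data. I would begin by reparametrizing in $t$ so that $(\eta_t,\theta_t)$ is stationary in open neighborhoods of $t = 0$ and $t = 1$; this is harmless, and it ensures that the symplectic form I build takes exactly the boundary-collar normal form $dt\wedge\theta_i + \eta_i$ near each end of the cylinder, which is what is needed to read off the induced cosymplectic structures at the two boundary components.

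Next, the constancy of $[\eta_t]\in H^2(Z)$ allows me to choose a smooth family of 1-forms $\beta_t\in\Omega^1(Z)$ with $d\beta_t = \dot\eta_t$; concretely, after fixing a Riemannian metric on $Z$ and using the Hodge right-inverse $P$ to $d$, I would take $\beta_t := P\dot\eta_t$, which depends smoothly on $t$ and vanishes wherever $\dot\eta_t$ does --- in particular near $t=0$ and $t=1$ after the reparametrization. On $M = Z \times [0,N]$ I then define
\[
\Omega := \tilde\eta + dt\wedge\Bigl(\tilde\theta + \tfrac{1}{N}\tilde\beta\Bigr),
\]
where $\tilde\eta, \tilde\theta, \tilde\beta$ denote the families $\eta_{t/N}, \theta_{t/N}, \beta_{t/N}$ pulled back horizontally to $M$. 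A short calculation shows $d\Omega = 0$: the contribution $\tfrac{1}{N}dt\wedge\dot\eta_{t/N}$ coming from $d\tilde\eta$ cancels the analogous contribution from $d\bigl(dt\wedge\tfrac{1}{N}\tilde\beta\bigr)$, while $d\theta_t = 0$ takes care of the remaining term.

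The point I expect to require most care is the nondegeneracy of $\Omega$: one computes
\[
\Omega^n \;=\; n\,dt\wedge\bigl(\theta_{t/N} + \tfrac{1}{N}\beta_{t/N}\bigr)\wedge\eta_{t/N}^{n-1},
\]
and the cosymplectic hypothesis yields a uniform lower bound for $|\theta_s\wedge\eta_s^{n-1}|$ on the compact product $Z\times[0,1]$, while the family $\beta_s\wedge\eta_s^{n-1}$ is uniformly bounded above. The role of the stretching parameter $N$ is precisely to render the correction $\tfrac{1}{N}\beta$ negligible compared to $\theta$, so that nondegeneracy survives for $N$ large enough. Finally, on a neighborhood of each boundary component $\Omega$ takes the form $dt\wedge\theta_i + \eta_i$, so $\partial_t$ is a symplectic vector field transverse to the boundary whose contraction recovers $\theta_i$; since $\partial_t$ is inward at $t = 0$ and outward at $t = N$, the incoming cosymplectic structure is $(\eta_0,\theta_0)$ and the outgoing one is $(\eta_1,\theta_1)$, as desired.

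The three stability consequences then follow by exhibiting explicit cosymplectic homotopies with $[\eta_t]$ constant: for a positive rescaling one uses $\theta_t := (1 + t(\lambda-1))\theta$ with $\eta_t := \eta$; for a small $C^0$-perturbation $\theta'$, the straight line $\theta_t := (1-t)\theta + t\theta'$ remains cosymplectic by openness of the nondegeneracy condition; and a cohomologous replacement $\eta' = \eta + d\alpha$ is handled by $\eta_t := \eta + t\,d\alpha$ with $\theta$ fixed, broken into sufficiently small subintervals and composed as cobordisms if needed to preserve the cosymplectic condition along the way.
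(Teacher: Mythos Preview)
Your main construction is correct. The paper organizes the argument differently: it first subdivides $[0,1]$ finely enough that on each subinterval the form $\theta_t\wedge dt+\eta_{t_i}$ (with the $2$-form frozen at the left endpoint) is already symplectic on the cylinder, thereby reducing to the case $\theta_0=\theta_1$; with the foliation $\ker\theta_0$ now fixed, it then runs Thurston's trick for the codimension-two product foliation, setting $\omega'=\eta_0-d\tilde\alpha$ (leafwise symplectic since $\omega'|_{Z\times\{t\}}=\eta_t$) and adding a large multiple $K\,\theta_0\wedge dt$. Your one-shot construction on the stretched cylinder $Z\times[0,N]$ handles the variation of $\theta_t$ and $\eta_t$ simultaneously; the Hodge primitive $\beta_t$ and the stretching parameter $N$ play the roles of the paper's $\tilde\alpha$ and $K$. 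Your route is more streamlined, while the paper's decoupling makes the appearance of Thurston's trick explicit and avoids the need to produce a smooth family of primitives for $\dot\eta_t$.

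One small caveat concerns your third stability consequence: when $n>2$ the straight line $\eta_t=\eta+t\,d\alpha$ need not remain cosymplectic with the fixed $\theta$ at intermediate times, and ``breaking into sufficiently small subintervals'' does not repair this, since the obstruction is to the existence of the cosymplectic homotopy itself rather than to the cobordism construction. (In dimension three the straight line does work, since $\theta\wedge\eta_t$ is then a convex combination of two volume forms of the same sign.) The paper does not spell this point out either; in all subsequent applications the required homotopy is exhibited directly.
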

\begin{proof}
Subdivide $[0,1]$ into $0=t_0 < t_1 < \cdots <t_N=1$ so that $\th_{t_{i+1}}|_{\ker \eta_{t}}>0$, for all $t \in [t_i,t_{i+1}]$.
It suffices to show that $(Z,\eta_{t_i},\th_{t_i})$ is cobordant to $(Z,\eta_{t_{i+1}},\th_{t_{i+1}})$ for each $i$,
so we may as well assume that $N=1$. Now, $\th_t \wedge dt+\eta_0$ is then a symplectic form on $M=Z\times [0,1]$
defining a cosymplectic cobordism between $(Z,\eta_0,\th_0)$ and $(Z,\eta_0,\th_1)$. Hence we may assume without
loss of generality $\th_0=\th_1$.

Let $\mathcal{F}$ denote the codimension-two foliation on $M$ which is the product of the foliation of the interval by points and the
cosymplectic foliation on $Z$.
We employ a suitable adaptation of Thurston's trick for $\mathcal{F}$.

Let $\eta_0-\eta_t=d\alpha_t$, and choose $\tilde{\alpha}\in \Omega^1(M)$ such that $\tilde{\alpha}_{\mid Z\times\{t\}}=\alpha_t$. Define
$\w'=\eta_0-d\tilde{\alpha}$
which is symplectic on the leaves of $\mathcal{F}$. Since $M$ is compact, for $K>0$ large enough, the form
\[\w=\w'+K\theta_0\wedge dt\]
is symplectic and restricts to $\eta_i$ on $Z_i$. Hence $(M,\w,\th)$, with $\th_{\mid Z\times\{i\}}=\th_i$, $i=0,1$,
is the desired cobordism from  $(\eta_0,\theta_0)$ to $(\eta_1,\theta_1)$. Reversing the direction of the path $(\eta_t,\th_t)$
gives a cobordism in the opposite direction.
\end{proof}

Next, we discuss symplectic fillability for the $C^0$-dense subset of cosymplectic structures given by 1-forms with rational periods.

\subsection*{Symplectic mapping tori and symplectic fillings}

We shall regard a mapping torus as a foliated bundle with base $\s^1$.
The corresponding holonomy representation is generated by a diffeomorphism $\varphi \in \Diff(F)$. Conversely,
the suspension any such $\varphi$ defines a mapping torus $Z(\varphi)$ with fiber diffeomorphic to $F$.

As $F$ is assumed to be compact, mapping tori can be defined as fibrations $Z\rightarrow \s^1$ with an Ehresmann connection. We shall also identify $\s^1$ with $\R/\Z$.

A {\bf symplectic mapping torus} is a bundle over the circle, whose total space is endowed with a closed
two-form $\eta$ which is symplectic on each fiber. The kernel of $\eta$ defines an Ehresmann connection, and its holonomy $\varphi$ preserves
the symplectic structure of the fiber, i.e., $\varphi\in \Symp(F,\sigma)$. Conversely, the suspension of any
$\varphi\in \Symp(F,\sigma)$ canonically defines a symplectic mapping torus $(Z(\varphi),\eta_\varphi)$.

A symplectic mapping torus becomes a cosymplectic manifold upon the choice of a defining closed 1-form for the fibration; this is equivalent
to the choice of a period $\lambda>0$, as it is convened that the pullback of the oriented generator of $H^1(\s^1;\Z)$ has period 1.

We shall abuse notation and regard a symplectic mapping torus as a cosymplectic manifold $(Z(\varphi),\eta_\varphi,\theta_\varphi)$ by declaring $\theta_\varphi$ to have period 1. Having this convention in mind, Lemma  \ref{lem : cosymp cob} implies that a cosymplectic manifold with compact foliation is symplectically fillable if and only if its associated symplectic mapping torus is symplectically fillable.

Let $\varphi_0,\varphi_1\in \Symp(F,\sigma)$. A symplectic isotopy $\varphi_t$ between $\varphi_0$ and $\varphi_1$ corresponds to an isotopy of symplectic vector fields given by the normalized kernel of the closed 2-forms $\eta_{\varphi_t}$. It is well-known that the isotopy can be modified to a Hamiltonian one iff the vector fields are Hamiltonian, i.e. if the cohomology class $[\eta_{\varphi_t}]$ is independent of $t$.
Hence by Lemma \ref{lem : cosymp cob} the symplectic fillability of $(Z(\varphi),\eta_\varphi)$ only depends on the Hamiltonian isotopy class of $\varphi$. Lastly, since changing the identification of the fiber $(F,\sigma)$ by a symplectomorphism results in conjugating the holonomy by that symplectomorphism, we see that whether $(Z(\varphi),\eta_\varphi)$ is symplectically fillable or not $\varphi$ only depends on the Hamiltonian
isotopy class of its conjugacy class.

In what follows, we discuss a family of Hamiltonian isotopy classes of diffeomorphisms yielding fillable
symplectic mapping tori.

\subsection*{Dehn twists}
There is a class of symplectomorphisms $\varphi$ which we can `cap off': Dehn twists. We briefly recall the
construction of those maps, and refer the reader to \cite{Se03} for further details.

The norm function $\mu:(T^{\ast}\s^{n-1} \diagdown \s^{n-1}) \to \R$, $\mu(\xi)=\Vert \xi \Vert$, associated to the
round metric on $\s^{n-1}$, is the moment map of a Hamiltonian
$\s^1$-action on $(T^{\ast}\s^{n-1} \diagdown \s^{n-1})$. Upon identifying $T^{\ast}\s^{n-1} \subset \R^{n}\times \R^{n}$
as $T^{\ast}\s^{n-1}=\lbrace (u,v):\langle u,v\rangle=0,\Vert u \Vert =1\rbrace$, we can write
\[
 e^{2\p it}\cdot (u,v)=\left(\cos(2\p t)u+\sin(2\p t)v\Vert v \Vert^{-1},\cos(2\p t)v-\sin(2\p t)\Vert v\Vert u\right),
\]Then $e^{\p}\cdot (u,v)=(-u,-v)$ extends by the antipodal map to a symplectomorphism $T^{\ast}\s^{n-1} \to T^{\ast}\s^{n-1}$.

Choose now a function $r:\R \to \R$, satisfying:
 \begin{enumerate}
  \item $r(t)=0$ for $|t|\geqslant C >0$;
  \item $r(t)-r(-t)=t$ for and $|t|\ll 1$,
 \end{enumerate}and let $\phi^t$ denote the flow of the Hamiltonian vector field  of $r(\mu)$.

 Observe that $\phi^{2\p}$ extends to a symplectomorphism $\psi:T^{\ast}\s^{n-1} \to T^{\ast}\s^{n-1}$, supported
 on the compact subspace $T(\e)\subset T^{\ast}\s^{n-1}$ the subspace of cotangent vectors of length $\leqslant C$.
 This is called a {\bf model Dehn twist}.

We can graft this construction onto manifolds using Weinstein's Lagrangian neighborhood theorem. If $l:\s^{n-1} \inc (F,\sigma)$
embeds $\s^{n-1}$ as a Lagrangian sphere, there are neighborhoods
$\s^{n-1} \subset U \subset T^{\ast}\s^{n-1}$ and $l(\s^{n-1}) \subset V \subset F$ and
a symplectomorphism $\varphi:(U,\omega_{\textrm{can}}) \to (V,\w)$ extending $l$. If $\psi$ is a model Dehn twist,
supported inside $U$, we produce a symplectomorphism $\tau :(F,\sigma)\to (F,\sigma)$, supported in $V$, by
\[
 \tau(x):=\begin{cases}
           \varphi\circ\psi\circ\varphi^{-1}(x) & \text{ if }x \in V;\\
           x & \text{ if }x \in F\diagdown V.
          \end{cases}
\]
\begin{definition}\label{definition : dehn twist}
 A symplectomorphism of the form above will be called a {\bf Dehn twist} around $l:=l(\s^{n-1})$, and it will be denoted by $\tau_l$.
\end{definition}
We also recall that any two Dehn twists around a parametrized Lagrangian sphere $l$ are
Hamiltonian isotopic if $n>2$, and symplectically isotopic if $n=2$ \cite{Se03}.

\begin{proof}[Proof of Theorem C]

By Lemma \ref{realizable if and only if nullcobordant}, is it enough to show that $Z(\varphi)$ is symplectically fillable.

Assume that  $l:\s^{n-1} \inc Z(\varphi)$, $n>2$, is a parametrized Lagrangian sphere sitting inside a leaf of $Z(\varphi)$,
and observe that the normal bundle to $l$ in $Z(\varphi)$ is trivial and carries a canonical framing.
The cobordism $M$ obtained from $Z(\varphi)\times [0,1]$ by attaching a $n$-handle along $l\times 1$ carries the
structure of a cosymplectic cobordism, as follows from \cite[Proposition 4]{Ma12},
with $\bd_{\inco}M \simeq Z(\varphi)$ and $\bd_{\out}M \simeq Z(\tau_l^{-1}\varphi)$ according
to \cite[Theorem 3]{Ma12}. We call this the \emph{trace} of a positive Lagrangian surgery along $l$.
Negative Lagrangian surgeries can be similarly defined, by attaching a $n$-handle to $l\times 1$
according to the opposite of the canonical framing, and one obtains a cosymplectic cobordism $M$ with
$\bd_{\inco}M \simeq Z(\varphi)$ and $\bd_{\out}M \simeq Z(\varphi\tau_l)$. Note that if, after performing negative Lagrangian surgery, we modify the identification of $F$ by the Dehn twist, we also obtain  $\bd_{\out}M \simeq Z(\tau_l\varphi)$.

By hypothesis, $\varphi$ is --up to conjugacy by a symplectomorphism-- Hamiltonian isotopic to
$\tau_{l_1}^{\varepsilon_1} \cdots \tau_{l_m}^{\varepsilon_m}$, where  $l_i:\s^{n-1}\inc (F,\sigma)$, are parametrized Lagrangian spheres and ${\varepsilon_i}=\pm 1$,  $i=1,\dots,m$.

By the discussion above,
\[
 Z(\varphi), \quad Z(\tau_{l_1}^{-\varepsilon_1}\varphi), \quad \cdots,
\cdots, Z(\tau_{l_1}^{-\varepsilon_1} \cdots \tau_{l_m}^{-\varepsilon_m}\varphi)=Z(\id_F)
\]are all cosymplectic cobordant, and $Z(\id_F)$ is symplectically fillable, since it bounds
$(F \times \cD^2,\sigma+dy_1 \wedge dy_2)$.

This shows that $Z(\varphi)$ is symplectically fillable, and hence by Lemma \ref{realizable if and only if nullcobordant}  it is the singular locus of $(M,\w)$ a closed, oriented $b$-symplectic manifold.
The construction of $(M,\w)$ non-orientable deferred to the end of the section.
\end{proof}

\begin{proof}[Proof of Theorem B]
 Let $(Z,\eta,\theta)$ be a $3$-dimensional cosymplectic manifold. By Lemma \ref{realizable if and only if nullcobordant} all we must show  is that $(Z,\eta,\theta)$ is symplectically fillable (again, the case of $(M,\w)$ non-orientable is deferred to the end of the section). By Lemma \ref{lem : cosymp cob}, we may assume without loss  of generality that $(Z,\eta,\theta)$ is a symplectic mapping torus. But, according to Eliashberg \cite{E04}, all $3$-dimensional symplectic mapping tori are symplectically fillable.
 \end{proof}

 \begin{remark}[A comment of Y. Mitsumatsu] One can give an alternative proof of Theorem B without using the full strength of Eliashberg's result as follows: every symplectic transformation on a closed surface $\Sigma$ is symplectically isotopic to
a word on Dehn twists \cite{Li64}. By the proof of Theorem C, our starting mapping torus is cobordant to one whose monodromy is symplectically isotopic to the identity. This means that we arrived at the trivial mapping torus $\Sigma\times \s^1$, but with
\[\eta=\sigma +\theta\wedge \beta,\]
where $\beta$ is a closed 1-form on $\Sigma$. By Lemma \ref{lem : cosymp cob}, we may assume without loss
 of generality that $\beta$ has integral periods. As Y. Mitsumatsu has pointed out to us, one can get rid of the summand $\theta\wedge \beta$ by first applying a positive Lagrangian surgery, followed by a negative Lagrangian surgery, around the Poincar\'e dual to $\beta$ (and rescaling the original $\eta$).
\end{remark}

\begin{remark}[The non-orientable case]
A few words about the non-orientable case are in order. The normal bundle to the singular locus of a non-orientable $b$-symplectic manifold is classified by a nontrivial $\tau\in H^1(Z;\Z_2)$, which determines a two-sheeted covering $Z_{\tau} \to Z$. It is not difficult to see that a cosymplectic manifold $(Z,\theta,\eta)$ is the singular locus of a non-orientable $(M,\w)$, with normal bundle given by $\tau\in H^1(Z;\Z_2)$, if and only if the covering $(Z_\tau,\theta_\tau,\eta_\tau)$ is the singular locus of an orientable $b$-symplectic manifold.

This allows one to prove version of Theorems B and C when the ambient $b$-symplectic manifold is non-orientable.
\end{remark}


\begin{thebibliography}{xxxxx}

\bibitem{CanGuiWoo} A.~Cannas da Silva, V.~Guillemin, C.~Woodward, \emph{On the unfolding of folded symplectic structures}, Math. Res. Lett \textbf{7} (2000), 35--53.

\bibitem{Can10} A.~Cannas da Silva, \emph{Fold-forms for four-folds}, J. Symplectic Geom. \textbf{8},  2 (2010), 189--203.

\bibitem{Cav13} G.~Cavalcanti, \emph{Examples and counter-examples of log-symplectic manifolds}, arXiv:1303.6420.

\bibitem{E04} Y.~Eliashberg, \emph{A few remarks about symplectic filling},  Geom. Topol. \textbf{8} (2004) 277--293.

\bibitem{EM} Y.~Eliashberg, N.~Mishachev, \emph{Holonomic approximation and Gromov's $h$-principle}, in: Essays on Geometry and Related Topics: M\'{e}moires d\'{e}di\'{e}s \`{a} Andr\'{e} H\ae fliger, Monogr. Enseign. Math. \textbf{38} 2 Vols. (2001), 271--285.

 \bibitem{Gr86} M.~Gromov, \emph{Partial differential relations}, Ergebnisse der Mathematik und ihrer Grenzgebiete \textbf{9}, Springer-Verlag (1986).

\bibitem{GuL} M.~Gualtieri, S.~Li, \emph{Symplectic groupoids of log symplectic manifolds},  Int. Math. Res. Not. IMRN 2014, no. 11, 3022--3074.

\bibitem{GMP11} V.~Guillemin, E.~Miranda, A.~R.~Pires, \emph{Codimension one symplectic foliations and regular Poisson structures.} Bull. Braz. Math. Soc. (N.S.) \textbf{42} (2011), no. 4, 607--623.

\bibitem{GMP12} V.~Guillemin, E.~Miranda, A.~R.~Pires, \emph{Symplectic and Poisson geometry on $b$-manifolds}, Adv.  Math., 264, (2014),  864--896.


\bibitem{Li64} W.~Lickorish, \emph{A finite set of generators for the homeotopy group of a $2$-manifold}, Proc. Cambridge Philos. Soc. \textbf{60} (1964), 769--778.


\bibitem{MT1}  I.~M\u{a}rcu\cb{t}, B.~Osorno Torres, \emph{On cohomological obstructions to the existence of log symplectic structures}, J. Symplectic Geom. \textbf{12} (2014), no. 4, 863--866.

\bibitem{MT2}  I.~M\u{a}rcu\cb{t}, B.~Osorno Torres, \emph{Deformations of log symplectic structures},  J. London Math. Soc. (2014) 90 (1): 197-212.

\bibitem{Ma12} D.~Mart\'inez Torres, \emph{Codimension-one foliations calibrated by nondegenerate closed 2-forms},
Pacific J. Math. \textbf{261} (2013), no. 1, 165--217.
\bibitem{Me93} R.~Melrose, \emph{Atiyah-Patodi-Singer Index Theorem}, Research Notices in Mathematics, A.K. Peters, Wellesley (1993).

\bibitem{NT96} R.~Nest, B.~Tsygan, \emph{Formal deformations of symplectic manifolds with boundary},  J. Reine Angew. Math. \textbf{481} (1996), 27--54.

\bibitem{Ra02} O.~Radko, \emph{A classification of topologically stable Poisson structures on a compact oriented surface},  J. Symplectic Geom.  \textbf{1} No. 3 (2002), 523--542.

\bibitem{Se03} P.~Seidel, \emph{A long exact sequence for symplectic Floer cohomology}, Topology \textbf{42} No. 5 (2003), 1003--1063.
\end{thebibliography}
\end{document}